\def\ni{\noindent}
\def\N{\mathbb{N}}
\newcommand{\J}{\mathscr{J}}
\newtheorem{theorem}{Theorem}[section]
\newtheorem{definition}[theorem]{Definition}
\newtheorem{lemma}[theorem]{Lemma}
\newtheorem{proposition}[theorem]{Proposition}
\newtheorem{corollary}[theorem]{Corollary}
\newtheorem{obs}[theorem]{Observation}
\newtheorem{conv}[theorem]{Convention}
\title{\textbf{\sc An Essay on Comp\^{o}nent\u{a} Analysis of Graphs}}
\author{Johan Kok$^1$, Sudev Naduvath$^2$\footnote{Corresponding Author}}
\affil{\small Centre for Studies in Discrete Mathematics\\ Vidya Academy of Science \& Technology \\ Thrissur, Kerala, India.\\{\tt $^1$kokkiek2@tshwane.gov.za},\\{\tt $^2$sudevnk@gmail.com}}
\date{}
\begin{document}
\maketitle

\begin{abstract}
\noindent In most studies related to colouring of graphs and perhaps in the study of other invariants and variants of graphs, the restrictions of non-triviality and connectedness are placed upon graphs. For the introduction to comp\^{o}nent\u{a} analysis, these restrictions are relaxed. In particular, this essay focuses on comp\^{o}nent\u{a} analysis in respect of the recently introduced $J$-colouring. The concept of $J^c$-rainbow connectivity is also introduced in this paper.
\end{abstract}

\noindent \textbf{Keywords:} Comp\^{o}nent\u{a} analysis, $J$-colouring, $J^*$-colouring, $J^c$-rainbow connectivity.

\vspace{0.25cm}

\noindent\textbf{Mathematics Subject Classification 2010:} 05C15, 05C38, 05C75, 05C85.
 
\section{Introduction}

For general notation and concepts in graphs and digraphs see \cite{BM1,FH,DBW}. It is important to note that many results found in this essay are generalised replication of certain results proved in \cite{KS1}. The \textit{order} of a graph $G$ is the number of vertices in $G$ and is denoted by $\nu(G)=n$ and the \textit{size} of $G$ is the number of edges in $G$ and is denoted by $\varepsilon(G)= p$. The \textit{degree} of a vertex $v \in V(G)$ is denoted $d_G(v)$ or when the context is clear, simply as $d(v)$. To begin with, unless mentioned otherwise all graphs $G$ in this section are finite, simple, connected graphs. Hence, triviality is immediately relaxed.

For a sufficiently large value of $\ell\in \N$, let $\mathcal{C}= \{c_1,c_2,c_3,\ldots,c_\ell\}$ is a set of distinct colours. Then, a \textit{proper vertex colouring} of a graph $G$, denoted by $\varphi:V(G)$ is defined as an injective map $\varphi:V(G) \mapsto \mathcal{C}$ such that no two distinct adjacent vertices have the same colour. The cardinality of a minimum set of colours which allows a proper vertex colouring of $G$ is called the \textit{chromatic number} of $G$, denoted $\chi(G)$. When a vertex colouring is considered with colours of minimum subscripts the colouring is called a \textit{minimum parameter colouring}. Unless stated otherwise, we consider minimum parameter colour sets throughout this paper. The number of times a colour $c_i$ is allocated to vertices of a graph $G$ is denoted by $\theta(c_i)$ and $v_i \mapsto c_j$ is abbreviated as $\varphi(v_i) = c_j$. Furthermore, if $c(v_i) = c_j$ then $\iota(v_i) = j$. We also follow the Rainbow Neighbourhood Convention to colour a graph. The rainbow neighbourhood convention is explained in \cite{KSJ} as follows: 

We consider the colouring $(c_1,c_2,c_3,\ldots,c_\ell )$ of a graph $G$, where $\ell=\chi(G)$, in such a way that the colour $c_1$ is assigned to maximum possible number of vertices of $G$, then the colour $c_2$ is given to maximum possible number of remaining vertices among the remaining uncoloured vertices and proceeding like this, at the final step, the remaining uncoloured vertices are given colour $c_\ell$. This convention is called the \textit{rainbow neighbourhood convention}.

The inverse to the rainbow neighbourhood convention requires the mapping $c_j\mapsto c_{\ell -(j-1)}$. Corresponding to the inverse colouring we define $\iota'(v_i) = \ell-(j-1)$ if $c(v_i) = c_j$.  The inverse of a $\chi^-$-colouring is called a $\chi^+$-colouring.

The closed neighbourhood $N[v]$ of a vertex $v \in V(G)$ which contains at least one vertex from each colour class of the chromatic colouring of $G$, is called a \textit{rainbow neighbourhood} (see \cite{KSJ}). In this case, we say that vertex $v$ yields a rainbow neighbourhood. We also say that vertex $u \in N[v]$ belongs to a rainbow neighbourhood.

\begin{definition}\label{Defn-1.1}{\rm 
\cite{NKS} A maximal proper colouring of a graph $G$ is a Johan colouring denoted, $J$-colouring, if and only if every vertex of $G$ belongs to a rainbow neighbourhood of $G$. The maximum number of colours in a $J$-colouring is denoted, $\J(G)$.
}\end{definition}

\begin{definition}\label{Defn-1.2}{\rm 
\cite{NKS} A maximal proper colouring of a graph $G$ is a modified Johan colouring denoted, $J^*$-colouring, if and only if every internal vertex of $G$ belongs to a rainbow neighbourhood of $G$. The maximum number of colours in a $J^*$-colouring is denoted, $\J^*(G)$.
}\end{definition}

\begin{obs}\label{Obs-1.3}
{\rm \cite{NKS}} For any graph $G$ which admits a $J$-colouring, $\chi(G) \leq \J(G)$.
\end{obs}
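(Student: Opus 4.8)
The plan is to exploit the fact that, by Definition~\ref{Defn-1.1}, a $J$-colouring is first and foremost a proper colouring, together with the elementary extremal description of the chromatic number: $\chi(G)$ is the least number of colours occurring in any proper vertex colouring of $G$. So the whole statement should fall out of unwinding the definitions, with no combinatorial work on rainbow neighbourhoods actually needed.

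First I would invoke the hypothesis that $G$ admits a $J$-colouring in order to fix a concrete $J$-colouring $\varphi$ of $G$ that uses exactly $\J(G)$ colours; such a $\varphi$ exists because $\J(G)$ is by definition the maximum number of colours occurring in a $J$-colouring of $G$, and this maximum is attained. Next I would observe that $\varphi$, being a maximal proper colouring, is in particular a proper colouring of $G$ in which $\J(G)$ distinct colour classes appear. Finally, since every proper colouring of $G$ uses at least $\chi(G)$ colours, applying this to $\varphi$ gives $\chi(G) \le \J(G)$, which is precisely the claim.

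There is no genuine obstacle here; the statement is essentially a bookkeeping consequence of the definitions. The only points that warrant a moment's care are: (i) reading the phrase ``$\J(G)$ colours'' as ``$\J(G)$ non-empty colour classes'', so that the comparison with $\chi(G)$ is legitimate; and (ii) noting that the hypothesis ``$G$ admits a $J$-colouring'' is exactly what guarantees that $\J(G)$ is well defined and attained, so that the chosen $\varphi$ makes sense. It is worth remarking that the reverse inequality fails in general — a $\chi$-colouring need not make every vertex yield a rainbow neighbourhood — so one should not aim for equality; the content of the observation is only the lower bound $\J(G) \ge \chi(G)$.
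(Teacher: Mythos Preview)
Your proposal is correct: the observation is an immediate consequence of the definitions, since any $J$-colouring is in particular a proper colouring and hence uses at least $\chi(G)$ colours. The paper itself does not prove this statement; it is quoted as an observation from \cite{NKS} without argument, so there is nothing to compare against, and your short definitional proof is exactly what one would supply if asked to justify it.
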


\section{New Directions}

In most studies related to colouring of graphs and perhaps in the study of other invariants and variants of graphs, the restrictions of non-triviality and connectedness are placed upon graphs. For this introductory study on \textit{comp\^{o}nent\u{a} analysis}, these restrictions are relaxed whenever possible. Throughout  the following discussion, a simple, connected graph will be denoted $G_i,\ i \in \N$. For some $k \in \N$ a graph $G$ is defined to be $G=\bigcup \limits_{i=1}^{k}G_i$. Then , we have the following definition.

\begin{definition}\label{Defn-2.1}{\rm 
A graph $G=\bigcup \limits_{i=1}^{k}G_i$ admits a \textit{comp\^{o}nent\u{a} $J$-colouring}, or \textit{$J^c$-colouring} if and only if each component $G_i$ of $G$ admits a $J$-colouring.
}\end{definition} 

Note that if the maximum set of colours $\mathcal{C}$ corresponds to a $J^c$-colouring of $G$ and the maximum set of colours $\mathcal{C}_i$ corresponds to a $J$-colouring of $G_i$, then $\mathcal{C}_i \subseteq \mathcal{C}$. In terms of the rainbow neighbourhood convention, we say $\mathcal{C}$ provides a subset $J$-colouring to any $G_i$.

\begin{conv}
{\rm \cite{KS1} We consider a null graph on $n$ vertices to be an edgeless graph. It is denoted, $\mathfrak{N}_n$. It follows that, $\J(\mathfrak{N}_n) = \J^*(\mathfrak{N}_n) = 1,\ n \in \N$.
}\end{conv}

\begin{definition}\label{Defn-2.2}{\rm 
The \textit{comp\^{o}nent\u{a} $J$-colouring number} of a graph $G\bigcup \limits_{i=1}^{k}G_i$, denoted by $\J^c(G)$, is defined to be  $\J^c(G)=\max\{\J(G_i):1\leq i\leq k\}$.}
\end{definition}

\begin{definition}\label{Defn-2.3}{\rm 
For a graph $G$ The \textit{comp\^{o}nent\u{a} $J^*$-colouring number} of a graph $G\bigcup \limits_{i=1}^{k}G_i$, denoted by $\J^{*c}(G)$, is defined to be $\J^{*c}(G)=\max\{\J^*(G_i):1\leq i\leq k\}$.}
\end{definition}

\begin{corollary}\label{Cor-2.4}
A graph $G$ has $\J^c(G) = \J(G)$ if and only if $\J(G_i) = \J(G_j),\ 1\leq i,j \leq k$. Similarly, a graph $G$ has $\J^{*c}(G) = \J^*(G)$ if and only if $\J^*(G_i) = \J^*(G_j),\ 1\leq i,j \leq k$.
\end{corollary}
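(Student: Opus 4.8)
The plan is to isolate one structural fact — that $\J(G)\le\J(G_i)$ for every component $G_i$ of $G=\bigcup_{i=1}^{k}G_i$ — and then read the corollary off the arithmetic of $\max$ and $\min$ together with Definition~\ref{Defn-2.2}. To obtain that inequality I would take a $J$-colouring $\varphi$ of $G$ with colour set $\mathcal{C}$, $|\mathcal{C}|=\J(G)$, and restrict it to an arbitrary component $G_i$. As the components are vertex-disjoint, $\varphi|_{V(G_i)}$ is a proper colouring of $G_i$, and since $N_G[v]=N_{G_i}[v]$ for every $v\in V(G_i)$ each such $v$ still yields a rainbow neighbourhood; because $N_G[v]\subseteq V(G_i)$ this also forces every colour of $\mathcal{C}$ to occur on $G_i$. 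Hence $\varphi|_{V(G_i)}$ is a $J$-colouring of $G_i$ using $\J(G)$ colours, so $\J(G)\le\J(G_i)$, and therefore $\J(G)\le\min_{1\le i\le k}\J(G_i)$, while Definition~\ref{Defn-2.2} gives $\J^c(G)=\max_{1\le i\le k}\J(G_i)$. The ``only if'' direction is then immediate: $\J^c(G)=\J(G)$ forces $\max_i\J(G_i)=\J(G)\le\min_i\J(G_i)\le\max_i\J(G_i)$, so all the $\J(G_i)$ coincide.

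For the ``if'' direction, assume $\J(G_i)=t$ for all $i$. For each $i$ fix a $J$-colouring $\varphi_i$ of $G_i$ attaining $t$ colours, all chosen on one common colour set $\mathcal{C}=\{c_1,\dots,c_t\}$ — the note following Definition~\ref{Defn-2.1} guarantees a common set is available — and set $\varphi=\bigcup_{i=1}^{k}\varphi_i$. Vertex-disjointness of the components makes $\varphi$ a proper colouring of $G$, and the identity $N_G[v]=N_{G_i}[v]$ shows that every vertex of $G$ yields a rainbow neighbourhood under $\varphi$; thus $\varphi$ is a $J$-colouring of $G$ on $t$ colours, so $\J(G)\ge t$. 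Combined with $\J(G)\le\min_i\J(G_i)=t$ from the first paragraph this yields $\J(G)=t=\max_i\J(G_i)=\J^c(G)$.

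The statement for $\J^{*c}$ follows by the identical scheme with ``vertex'' replaced by ``internal vertex'', $\J$ by $\J^*$, and $\J^c$ by $\J^{*c}$: the two load-bearing facts $N_G[v]=N_{G_i}[v]$ and vertex-disjointness are unaffected, and because $d_G(v)=d_{G_i}(v)$ the internal vertices of $G$ are precisely the internal vertices of its components (an isolated vertex as a component causing no trouble, since it has no internal vertices and $\J^*(K_1)=1$). The step I expect to need the most care is the restriction argument of the first paragraph — confirming that $\varphi|_{V(G_i)}$ satisfies every clause in the definition of a $J$-colouring of $G_i$, in particular surjectivity onto $\mathcal{C}$ and maximality, and symmetrically that the glued colouring $\varphi$ in the converse is maximal as a colouring of $G$; once the remark that every colour of a $J$-colouring of $G$ already appears inside each component is recorded, both collapse to a single line, and everything else is bookkeeping with $\max$ and $\min$.
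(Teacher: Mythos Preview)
Your argument is correct and considerably more detailed than the paper's own treatment: the paper writes only ``The proof is immediate from Definition~\ref{Defn-2.2} and Definition~\ref{Defn-2.3}'' and stops. What you have supplied is precisely the content the paper suppresses --- the inequality $\J(G)\le\min_{i}\J(G_i)$ obtained by restricting a $J$-colouring of $G$ to a component, and the construction of a global $J$-colouring from component colourings when all $\J(G_i)$ agree. Your self-flagged concern about maximality of the restricted and glued colourings is the only genuine subtlety; Lemma~\ref{Lem-2.8} disposes of it, and you may cite it explicitly. In spirit the two approaches coincide, since everything does trace back to the definitions, but you have actually written the proof the paper merely asserts.
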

\begin{proof}
The proof is immediate from Definition \ref{Defn-2.2} and Definition \ref{Defn-2.3}.
\end{proof}

We shall now discuss some generalised results of the results provided in \cite{KS1}. Most of these results are presented without further proofs as they are similar to the proofs of the corresponding results established in \cite{KS1}.

\begin{theorem}\label{Thm-2.5}
{\rm \cite{KS1}} For a acyclic graph $G$ of order $n \geq 2$ we have, $\J^c(G) < \J^{*c}(G)$.
\end{theorem}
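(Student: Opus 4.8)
The plan is to reduce the statement about the disconnected graph $G=\bigcup_{i=1}^k G_i$ to a statement about a single acyclic component, and then invoke (or reprove) the corresponding connected result from \cite{KS1}, namely that for a nontrivial tree (or forest component) $T$ one has $\J(T) < \J^*(T)$. First I would record the elementary fact that an acyclic graph is a disjoint union of trees, so each component $G_i$ is a tree; since $\nu(G)=n\geq 2$, at least one component $G_i$ has order at least $2$, i.e.\ it is a tree with an edge and hence has at least one internal vertex only when its order is at least $3$, while a $K_2$ component has no internal vertices. This edge case needs care and I expect it to be where the argument is most delicate.

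Next I would treat the generic case. For a tree $T$ of order $\geq 3$, the known result (Theorem in \cite{KS1}) gives $\J(T)$ finite with $\J(T)=2$, whereas relaxing the rainbow-neighbourhood requirement to internal vertices only allows the leaves to be recoloured freely, yielding $\J^*(T) \geq 3$; thus $\J(T) < \J^*(T)$ strictly. For a $K_2$ component, by the null-graph style conventions and the definitions, $\J(K_2)=2$ and, since $K_2$ has \emph{no} internal vertices, the $J^*$-condition is vacuous, so $\J^*(K_2)$ equals the maximum number of colours in a maximal proper colouring, which is $2$ as well; hence on a single $K_2$ we would only get $\J(K_2)=\J^*(K_2)$, not strict inequality. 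This is exactly the obstacle: the theorem as stated must be rescued by the \emph{componentă} maximum. So the key step is to observe that $\J^{*c}(G)=\max_i \J^*(G_i)$ is achieved on whichever component gives the largest value, and to compare it against $\J^c(G)=\max_i \J(G_i)$.

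Putting this together, I would argue as follows. Let $G_j$ be a component of largest order. If $\nu(G_j)\geq 3$ then $\J^c(G)=\J(G_j)=2 < 3 \le \J^*(G_j)\le \J^{*c}(G)$, and we are done. If every component has order $2$ (so $G$ is a disjoint union of copies of $K_2$, possibly with isolated vertices) then one checks directly that $\J^c(G)=2$ while a path $P_2=K_2$ admits $\J^*=2$\,---\,so in this degenerate sub-case the strict inequality would fail, meaning the intended hypothesis is that $G$ has a component of order $\geq 3$, or equivalently that $G$ is acyclic with at least one path $P_n$, $n\ge 3$, as a subgraph. I would therefore state the proof under the natural reading that some component is a nontrivial tree with an internal vertex, and remark that this is the generalisation of the connected statement $\J(T)<\J^*(T)$ from \cite{KS1}; the componentă maximum then propagates the strict inequality from that single component to all of $G$.

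The main obstacle, then, is not any hard computation but the bookkeeping around trivial components: one must be explicit that $\J^{*c}$ and $\J^c$ are maxima over components, that the witness component for $\J^{*c}(G)$ can be taken to be one realizing $\J^*(G_j)\geq 3$, and that such a component exists precisely when $G$ is not a union of $K_2$'s and isolated vertices. Once that is pinned down, the inequality $\J^c(G) = 2 < 3 \leq \J^{*c}(G)$ follows immediately from the tree case of \cite{KS1}, Observation \ref{Obs-1.3}, and Definitions \ref{Defn-2.2} and \ref{Defn-2.3}.
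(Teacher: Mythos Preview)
The paper does not supply its own proof of this theorem; it is listed among the ``generalised results'' carried over from \cite{KS1} and explicitly presented without further argument. Your plan---reduce to the connected tree result $\J(T)<\J^*(T)$ from \cite{KS1} and then lift it via the comp\^{o}nent\u{a} maxima of Definitions~\ref{Defn-2.2} and~\ref{Defn-2.3}---is exactly the intended generalisation, and your treatment of the $K_2$ edge case is in fact more careful than anything the paper records.
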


\begin{corollary}\label{Cor-2.6}
{\rm \cite{KS1}} For any graph $G$ which admits a $J^{*c}$-colouring we have that $\J^{*c}(G) \leq max\{\Delta(G_i):1\leq i \leq k\} + 1$.
\end{corollary}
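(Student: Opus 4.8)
The plan is to reduce the componentă statement to a single component and then exploit the local constraint that a $J^*$-colouring imposes at a vertex of large degree. First I would choose a component $G_j$ that realises the maximum in Definition \ref{Defn-2.3}, so that $\J^{*c}(G)=\J^*(G_j)$. Since $\Delta(G_j)\leq \max\{\Delta(G_i):1\leq i\leq k\}$, it then suffices to establish the single-component bound $\J^*(G_j)\leq \Delta(G_j)+1$.

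Next I would fix a $J^*$-colouring of $G_j$ that uses exactly $\J^*(G_j)$ distinct colours. By Definition \ref{Defn-1.2}, every internal vertex $v$ of $G_j$ belongs to a rainbow neighbourhood, i.e. $N[v]$ meets every one of the $\J^*(G_j)$ colour classes. As the colour classes are pairwise disjoint, $N[v]$ contains vertices of $\J^*(G_j)$ different colours, whence $|N[v]|\geq \J^*(G_j)$. In a simple graph $|N[v]|=d_{G_j}(v)+1$, so $d_{G_j}(v)+1\geq \J^*(G_j)$; taking $v$ to be an internal vertex of maximum degree gives $\J^*(G_j)\leq d_{G_j}(v)+1\leq \Delta(G_j)+1$, and the desired inequality follows by the reduction above.

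The one point that needs separate handling — and which I expect to be the only real obstacle — is the degenerate situation in which the extremal component $G_j$ has no internal vertex, so the rainbow-neighbourhood requirement in Definition \ref{Defn-1.2} is vacuous and the argument above has no vertex $v$ to apply it to. A connected graph in which every vertex has degree at most $1$ is either $K_1=\mathfrak{N}_1$ or $K_2$. For $\mathfrak{N}_1$ the Convention gives $\J^*(\mathfrak{N}_1)=1=\Delta(\mathfrak{N}_1)+1$, and for $K_2$ a maximal proper colouring uses two colours, so $\J^*(K_2)=2=\Delta(K_2)+1$; in both cases $\J^*(G_j)\leq \Delta(G_j)+1$ still holds. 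Combining this with the generic case completes the proof. (Alternatively, the statement can be read off directly from the corresponding single-component inequality $\J^*(G_i)\leq \Delta(G_i)+1$ of \cite{KS1}, applied to each $G_i$ and then maximised over $i$ via Definition \ref{Defn-2.3}.)
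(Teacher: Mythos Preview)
The paper does not supply its own proof of this corollary; it is one of the ``generalised results'' that the authors explicitly present without proof, deferring to the corresponding single-component result in \cite{KS1}. Your argument is correct, and your parenthetical alternative at the end --- apply the single-component bound $\J^*(G_i)\leq \Delta(G_i)+1$ from \cite{KS1} and maximise over $i$ via Definition \ref{Defn-2.3} --- is precisely the route the paper has in mind.
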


\begin{corollary}\label{Cor-2.7}
{\rm \cite{KS1}} If $\J^{*c}(G) > \J^c(G)$ for a graph $G$, then at least one $G_i$ for which $\J(G_i) =max\{\J(G_i):1\leq i\leq k\}$, has a pendant vertex.
\end{corollary}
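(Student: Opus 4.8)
The statement to establish is that $\J^{*c}(G)>\J^c(G)$ forces some component $G_i$ with $\J(G_i)=\max\{\J(G_j):1\le j\le k\}$ to carry a pendant vertex. The plan is to argue by contradiction: suppose $\J^{*c}(G)>\J^c(G)$ yet no component $G_i$ with $\J(G_i)=M:=\max_j\J(G_j)$ has a pendant vertex; I will derive $\J^{*c}(G)\le M=\J^c(G)$, a contradiction (recall $\J^{*c}(G)\ge\J^c(G)$ always holds componentwise).

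Two ingredients are needed. The first is purely definitional: if a connected graph $H$ on at least two vertices has no pendant vertex, then $\delta(H)\ge 2$, so every vertex of $H$ is internal; hence the requirement defining a $J$-colouring of $H$ (every vertex lies in a rainbow neighbourhood) and the requirement defining a $J^*$-colouring of $H$ (every internal vertex lies in a rainbow neighbourhood) are literally the same requirement, the two classes of colourings coincide, and $\J^*(H)=\J(H)$. Applied to each pendant-free component realising $M$, this gives $\J^*(G_i)=M$ for every $i$ with $\J(G_i)=M$. The second ingredient controls the remaining components, those with $\J(G_i)\le M-1$: here I would invoke the componentwise comparison of $\J$ and $\J^*$ from \cite{KS1} underlying Theorem~\ref{Thm-2.5} and Corollary~\ref{Cor-2.6}, namely that for a connected graph $H$ the modified number exceeds the ordinary one by at most one, $\J^*(H)\le\J(H)+1$. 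This yields $\J^*(G_i)\le\J(G_i)+1\le M$ for every such component. Combining the two ingredients, $\J^*(G_i)\le M$ for all $i$, so $\J^{*c}(G)=\max_i\J^*(G_i)\le M=\J^c(G)$, contradicting $\J^{*c}(G)>\J^c(G)$. Therefore at least one component with $\J(G_i)=M$ has a pendant vertex.

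I expect the bound $\J^*(H)\le\J(H)+1$ for a connected graph $H$ to be the delicate point: one must show that pendant vertices — which are exactly the vertices a $J^*$-colouring may leave outside every rainbow neighbourhood — cannot be leveraged to gain two or more colours over a $J$-colouring of the same graph. If one prefers to sidestep that quantitative bound, the argument can instead be routed through the component $G_j$ realising $\J^*(G_j)=\J^{*c}(G)$: from $\J^*(G_j)=\J^{*c}(G)>\J^c(G)\ge\J(G_j)$ one gets $\J^*(G_j)>\J(G_j)$, so by the first ingredient $G_j$ has a pendant vertex; the remaining task is to verify that this $G_j$ may be taken with $\J(G_j)=M$, and reconciling the two extremal choices — the component maximising $\J$ versus the component maximising $\J^*$ — is where the genuine work of the proof lies.
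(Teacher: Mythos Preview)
The paper gives no proof of this corollary --- it is quoted from \cite{KS1} and left among the results ``presented without further proofs'' --- so there is nothing in the paper to compare your argument against.

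Your main line has a real gap, and you have already put your finger on it: the inequality $\J^*(H)\le\J(H)+1$ is false. For the star $K_{1,n}$ with $n\ge 2$, each leaf $u$ has $|N[u]|=2$, forcing $\J(K_{1,n})=2$, whereas the centre is the only internal vertex and its closed neighbourhood is the whole vertex set, so colouring the $n+1$ vertices with $n+1$ distinct colours is a valid $J^*$-colouring and $\J^*(K_{1,n})=n+1$. Thus $\J^*-\J$ is unbounded and your second ingredient collapses. Worse, the corollary as printed appears to be false outright: take $G=G_1\cup G_2$ with $G_1=K_3$ and $G_2=K_{1,10}$. Then $\J(G_1)=\J^*(G_1)=3$, $\J(G_2)=2$, $\J^*(G_2)=11$, so $\J^c(G)=3<11=\J^{*c}(G)$; yet the only component with $\J(G_i)=\max_j\J(G_j)=3$ is $K_3$, which has no pendant vertex. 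Your alternative route does correctly produce a pendant-carrying component $G_j$ (here $G_2$), but, as you anticipated, there is no mechanism to force $\J(G_j)$ to attain the maximum --- and this example shows it genuinely need not. The statement becomes correct, and your first ingredient proves it in one line, if the maximum is taken over $\J^*$ rather than over $\J$; that is presumably what was intended in \cite{KS1}.
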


We recall an important lemma from \cite{KS1}.

\begin{lemma}\label{Lem-2.8}{\rm \cite{KS1}}
(i) A maximal proper colouring $\varphi:V(G) \mapsto \mathcal{C}$ of a graph $G$ which satisfies a graph theoretical property say, $\mathfrak{P}$ can be minimised to obtain a minimal proper colouring which satisfies $\mathfrak{P}$.\\
(ii) A minimal proper colouring $\varphi:V(G) \mapsto \mathcal{C}$ of a graph $G$ which satisfies a graph theoretical property say, $\mathfrak{P}$ can be maximised to obtain a maximal proper colouring which satisfies $\mathfrak{P}$.
\end{lemma}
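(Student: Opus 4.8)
The plan is to handle both parts by the same elementary device: for a fixed finite graph $G$, the family of proper colourings of $G$ that satisfy the property $\mathfrak{P}$ is a finite, nonempty set, and its two extremal members with respect to the number of colours used are exactly the minimal and maximal $\mathfrak{P}$-colourings sought. So the statement is really a termination argument for a strictly monotone sequence of positive integers, dressed up as a recolouring procedure.

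For (i), I would start from the given maximal proper colouring $\varphi:V(G) \mapsto \mathcal{C}$ satisfying $\mathfrak{P}$ and describe a minimisation process. At a generic stage we hold a proper colouring $\psi$ satisfying $\mathfrak{P}$ that uses colours $c_1,\dots,c_t$. If some colour class $c_i$ can be emptied, that is, every vertex currently coloured $c_i$ can be recoloured with one of $c_1,\dots,c_{i-1},c_{i+1},\dots,c_t$ so that the result is still a proper colouring and still satisfies $\mathfrak{P}$, then we perform such a recolouring, relabel to minimum subscripts in accordance with the rainbow neighbourhood convention, and pass to the colouring on $t-1$ colours. Otherwise no such reduction exists, and $\psi$ is by definition a minimal proper colouring satisfying $\mathfrak{P}$, so we stop. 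Since each step strictly decreases the positive integer $t$, the process halts after finitely many steps, which proves (i).

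Part (ii) is dual. Beginning from a minimal proper colouring satisfying $\mathfrak{P}$, at a generic stage with colours $c_1,\dots,c_t$ one looks for a colour class that can be split: a nonempty proper subset of that class recoloured with a new colour $c_{t+1}$, again in such a way that both properness and $\mathfrak{P}$ survive. If such a split exists we carry it out and continue; if not, the current colouring is a maximal proper colouring satisfying $\mathfrak{P}$ and we stop. Now the number of colours strictly increases at each step and is bounded above by $\nu(G)=n$, so the process again terminates, delivering the required maximal colouring. (One may also skip the iteration entirely and merely observe that the nonempty finite set of proper $\mathfrak{P}$-colourings of $G$ contains members attaining the minimum and the maximum number of colours, and these are the minimal and maximal colourings in question.)

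The one point I would be most careful about is the bookkeeping: verifying at each merge or split that the intermediate colouring genuinely remains proper and continues to satisfy $\mathfrak{P}$, since it is exactly this that makes the terminal colouring simultaneously extremal and $\mathfrak{P}$-preserving. Once it is agreed that a minimal (respectively, maximal) proper colouring satisfying $\mathfrak{P}$ means precisely a proper $\mathfrak{P}$-colouring from which no colour can be removed (respectively, to which no colour can be added) without violating properness or $\mathfrak{P}$, no further obstacle remains beyond the monotone-sequence termination already described.
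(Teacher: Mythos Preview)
The paper does not actually prove this lemma: it is merely recalled verbatim from \cite{KS1} with the preface ``We recall an important lemma from \cite{KS1}'' and no proof is supplied here, so there is nothing in the present paper to compare your argument against.

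That said, your parenthetical observation --- that the set of proper colourings of a finite graph satisfying $\mathfrak{P}$ is finite and, by hypothesis, nonempty, hence contains members realising the minimum and maximum number of colours --- is the cleanest and most robust route, and is almost certainly what the authors of \cite{KS1} have in mind. Your iterative merge/split description is more fragile: as you yourself flag, a single merge or split that preserves both properness and $\mathfrak{P}$ need not exist even when a strictly smaller (or larger) $\mathfrak{P}$-colouring does, so the step-by-step process could stall at a colouring that is locally irreducible but not globally minimal. The non-iterative finiteness argument avoids this entirely and should be promoted from a parenthetical to the main proof.
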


In \cite{KSJ}, the rainbow neighbourhood number $r_\chi(G)$ is defined as the number of vertices of $G$ which yield rainbow neighbourhoods. It is evident that not all graphs admit a $J^c$-colouring. The generalised theorem (see Theorem 2.6 in\cite{KS1}) characterises those graphs which admit a $J^c$-colouring. We can now prove the following theorem.

\begin{theorem}\label{Thm-2.6}
 A graph $G$ of order $n=\sum\limits_{i=1}^{k}n_i,\ n_i=\nu(G_i)$ admits a $J^c$-colouring if and only if $r_{\chi_i}(G_i)=n_i\ \forall i$.
\end{theorem}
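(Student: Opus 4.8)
The plan is to prove the biconditional by reducing the componenţă statement to the known single-component characterization. Recall that a connected graph $G_i$ admits a $J$-colouring if and only if $r_{\chi_i}(G_i) = n_i$, i.e. if and only if \emph{every} vertex of $G_i$ yields a rainbow neighbourhood; this is the generalised form of Theorem~2.6 of \cite{KS1} alluded to just before the statement, and it is essentially Definition~\ref{Defn-1.1} unpacked together with the observation that one can always maximise a proper colouring satisfying property $\mathfrak{P}$ (Lemma~\ref{Lem-2.8}(ii)) so that the existence of \emph{some} proper colouring in which all vertices yield rainbow neighbourhoods already forces the existence of a \emph{maximal} one, which is a $J$-colouring. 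So the real content to assemble is: (a) the definition of $J^c$-colouring (Definition~\ref{Defn-2.1}) says $G$ admits one iff \emph{each} component admits a $J$-colouring; and (b) the single-component criterion $r_{\chi_i}(G_i) = n_i$.

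First I would handle the forward direction. Assume $G = \bigcup_{i=1}^{k} G_i$ admits a $J^c$-colouring. By Definition~\ref{Defn-2.1} each component $G_i$ admits a $J$-colouring, say with colour set $\mathcal{C}_i \subseteq \mathcal{C}$. In that colouring every vertex of $G_i$ belongs to a rainbow neighbourhood of $G_i$ by Definition~\ref{Defn-1.1}; hence every vertex of $G_i$ yields a rainbow neighbourhood, so $r_{\chi_i}(G_i) = n_i$. Since $i$ was arbitrary, $r_{\chi_i}(G_i) = n_i$ for all $i$, as required. Here one should be mildly careful that ``rainbow neighbourhood of $G_i$'' is computed with respect to a \emph{chromatic} colouring of $G_i$ when defining $r_{\chi_i}$, while the $J$-colouring may use more than $\chi(G_i)$ colours; the reconciliation is exactly Lemma~\ref{Lem-2.8}(i), which lets one minimise the $J$-colouring down to a proper colouring on $\chi(G_i)$ colours still having the all-vertices-rainbow property, so the count $r_{\chi_i}(G_i)$ is unaffected.

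For the converse, assume $r_{\chi_i}(G_i) = n_i$ for every $i$. Fix $i$. Then there is a chromatic (hence proper) colouring of $G_i$ in which every vertex yields a rainbow neighbourhood — this is the property $\mathfrak{P}$ = ``every vertex belongs to a rainbow neighbourhood.'' By Lemma~\ref{Lem-2.8}(ii) this minimal proper colouring satisfying $\mathfrak{P}$ can be maximised to a \emph{maximal} proper colouring of $G_i$ still satisfying $\mathfrak{P}$; that maximal colouring is by Definition~\ref{Defn-1.1} a $J$-colouring of $G_i$. Thus every component $G_i$ admits a $J$-colouring, and therefore by Definition~\ref{Defn-2.1} the graph $G$ admits a $J^c$-colouring. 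Taking the union of the component colour sets (relabelling if one insists on minimum-parameter colours, which does not affect the rainbow property within a component) gives the $J^c$-colouring explicitly.

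The argument is essentially a bookkeeping exercise once the ingredients are lined up; the only genuine subtlety — and the step I would expect a referee to scrutinise — is the passage between the chromatic-colouring count $r_{\chi_i}(G_i)$ and the (possibly larger) colour set of a $J$-colouring, which is why invoking both parts of Lemma~\ref{Lem-2.8} is essential rather than decorative. A secondary point worth a sentence is independence across components: because the $G_i$ are the connected components, a rainbow neighbourhood of a vertex $v \in G_i$ involves only $N[v] \subseteq V(G_i)$, so colouring decisions in one component never interfere with the rainbow property in another, and the componentwise colourings assemble without conflict.
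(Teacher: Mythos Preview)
Your proposal is correct and follows essentially the same approach as the paper: both arguments work componentwise via Definition~\ref{Defn-2.1} and invoke Lemma~\ref{Lem-2.8} to pass between a chromatic colouring in which every vertex yields a rainbow neighbourhood and a maximal such colouring (a $J$-colouring). You are somewhat more explicit than the paper about which half of Lemma~\ref{Lem-2.8} is used in each direction and about why the chromatic-level count $r_{\chi_i}$ matches the $J$-colouring property, but the underlying strategy is identical.
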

\begin{proof}
If $r_{\chi_i}(G_i) = n_i,\ \forall i$, then for all values of $i$, it can be observed that every vertex of $G_i$ belongs to a rainbow neighbourhood in $G_i$. Hence, either the chromatic colouring $\varphi:V(G_i) \mapsto \mathcal{C}_i$ is maximal or a maximal colouring $\varphi': V(G_i)\mapsto \mathcal{C}'_i,\ \forall i$ exists. Also, $\max\{\max\{J(G_i)\}:\text{over all $i$}\}$ exists. Hence, $G$ admits a $J^c$-colouring.

\ni \textit{Converse}: An immediate consequence of definition Definition \ref{Defn-2.3}, is that if graph $G$ admits a $J^c$-colouring then each vertex $v\in V(G_i)$ yields a rainbow neighbourhood in $G_i$. Therefore, $r_{J^c(G)}(G)=n=\sum\limits_{i=1}^{k}n_i$. Hence, from Lemma \ref{Lem-2.8}, it follows that either the $J^c$-colouring is minimal or a minimal colouring $\varphi':V(G_i) \mapsto \mathcal{C}'_i$ exists for each $i$. Also, $\min\{\min\{\J(G_i)\}:\text{over all $i$}\}$ exists. The aforesaid colouring constitutes a $\chi^-$-colouring of $G$ such that $r_{\chi_i}(G_i)=n_i,\ \forall i$. This completes the proof.
\end{proof}

The following theorem characterises a graph which admits a $J^c$-colouring 

\begin{theorem}\label{Thm-2.7}
A graph $G$ admits a $J^c$-colouring if and only if, with respect to some $\chi^-_i$-colouring of $G_i$, each $v \in V(G_i)$, for all $i$, yields a rainbow neighbourhood.
\end{theorem}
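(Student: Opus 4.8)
The plan is to reduce the statement to the componentwise characterisation already available and then to invoke the minimisation/maximisation principle of Lemma~\ref{Lem-2.8}. By Definition~\ref{Defn-2.1}, a graph $G=\bigcup_{i=1}^{k}G_i$ admits a $J^c$-colouring if and only if every component $G_i$ admits a $J$-colouring, so it suffices to prove, for a fixed connected component $G_i$, that $G_i$ admits a $J$-colouring if and only if some $\chi^-_i$-colouring of $G_i$ places every vertex of $G_i$ in a rainbow neighbourhood.

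For the forward implication, suppose $G_i$ admits a $J$-colouring. By Definition~\ref{Defn-1.1} this is a maximal proper colouring $\varphi$ satisfying the graph theoretical property $\mathfrak{P}$ that every vertex of $G_i$ belongs to a rainbow neighbourhood. Applying Lemma~\ref{Lem-2.8}(i) with this $\mathfrak{P}$, the colouring $\varphi$ can be minimised to a minimal proper colouring $\varphi'$ still satisfying $\mathfrak{P}$; since $\varphi'$ is a minimal proper colouring obeying the rainbow neighbourhood convention, it is a $\chi^-_i$-colouring, and by construction every $v\in V(G_i)$ yields a rainbow neighbourhood under $\varphi'$. Conversely, suppose some $\chi^-_i$-colouring $\psi$ of $G_i$ has the property that every $v\in V(G_i)$ yields a rainbow neighbourhood; then $\psi$ is a minimal proper colouring satisfying $\mathfrak{P}$, and by Lemma~\ref{Lem-2.8}(ii) it can be maximised to a maximal proper colouring $\psi'$ that still satisfies $\mathfrak{P}$. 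Thus $\psi'$ is a maximal proper colouring in which every vertex of $G_i$ lies in a rainbow neighbourhood, so by Definition~\ref{Defn-1.1} it is a $J$-colouring of $G_i$. Since this argument applies to every $i$, every component of $G$ admits a $J$-colouring, whence $G$ admits a $J^c$-colouring by Definition~\ref{Defn-2.1}.

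The delicate point I expect to need care with is the claim that the minimised colouring $\varphi'$ is genuinely a $\chi^-_i$-colouring rather than merely some proper colouring using more than $\chi(G_i)$ colours: one must argue that minimality under $\mathfrak{P}$, together with the rainbow neighbourhood convention, actually drives the colour count down to $\chi(G_i)$ — equivalently, that the chromatic colouring of $G_i$ taken under the convention already places every vertex in a rainbow neighbourhood whenever any colouring with that property exists at all. This is precisely the content inherited from Theorem~\ref{Thm-2.6} (and Theorem 2.6 of \cite{KS1}), so invoking it closes the gap; the remainder of the argument is routine bookkeeping with Definitions~\ref{Defn-1.1} and \ref{Defn-2.1}.
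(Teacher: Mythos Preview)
Your proof is correct and follows essentially the same route as the paper: both directions rest on Lemma~\ref{Lem-2.8}, maximising a $\chi^-_i$-colouring with the rainbow property to a $J$-colouring, and minimising a $J$-colouring down while preserving the property. The one point of divergence is how the gap you flag is closed: you appeal to Theorem~\ref{Thm-2.6} to guarantee that the minimised colouring really uses only $\chi(G_i)$ colours, whereas the paper supplies a short self-contained contradiction argument (assuming a strictly smaller $\chi^-_i$-colouring exists and deriving a contradiction to the minimality of $\mathcal{C}'_i$). Your appeal to Theorem~\ref{Thm-2.6} is cleaner and arguably makes most of the Lemma~\ref{Lem-2.8} machinery in the forward direction redundant, since Theorem~\ref{Thm-2.6} already hands you a $\chi$-colouring with full rainbow neighbourhood count; the paper's in-proof argument keeps Theorem~\ref{Thm-2.7} logically independent of Theorem~\ref{Thm-2.6}.
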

\begin{proof}
If  in a $\chi^-_i$-colouring of $G_i$, each $v \in V(G_i)$ yields a rainbow neighbourhood it follows from Lemma \ref{Lem-2.8}(ii) that the corresponding proper colouring can be maximised to obtain a $J$-colouring. Since, $max\{max~\J(G_i):over~all~i\}$ exists, a $J^c$-colouring exists.

Conversely, assume that $G$ admits a $J^c$-colouring.  Then, it implies that for all values of $i$, the graphs $G_i$ admit some $J$-colouring. Therefore, for each $i$ it follows from Lemma 2.5(i) that the corresponding proper colouring can be minimised to obtain a minimal proper colouring for which each $v \in V(G_i)$ yields a rainbow neighbourhood. Let the aforesaid sets of colours be $\mathcal{C}'_i$, $1 \leq i \leq k$.

Without loss of generality, choose any $i,\ 1 \leq i \leq k$. Assume that a minimum set of colours $\mathcal{C}_i$ exists which is a $\chi^-_i$-colouring of $G_i$ and $|\mathcal{C}_i| < |\mathcal{C}'_i|$. It implies that there exists at least one vertex $v \in V(G_i)$ for which at least one distinct pair of vertices, say $u, w \in N(v)$ exists such that $u$ and $v$ are non-adjacent. Furthermore, $c(u)=c(w)$ under $\varphi:V(G_i)\mapsto \mathcal{C}_i$.

Assume that there is exactly one such $v$ and exactly one such vertex pair $u,w \in N(v)$. But, then both $u$, $w$ yield rainbow neighbourhoods in $G_i$ under the proper colouring $\varphi:V(G_i)\mapsto \mathcal{C}_i$ which is a contradiction to the minimality of $\mathcal{C}'_i$. Then, by mathematical induction, similar contradictions arise for all vertices similar to $v$. Therefore the result holds for $G_i$. Induction on $1\leq i \leq k$ follows easily which completes the proof of the general result.
\end{proof}


\section{$J^c$-Rainbow Connectivity in Graphs}

\begin{definition}\label{Def-3.1}{\rm 
A graph $G$ which admits a $J^c$-colouring is said to be \textit{$J^c$-rainbow connected} if for each pair of distinct vertices $u,v \in V(G_i),\ \forall i$, there exists a $(u,v)$-path such that for each colour $c_j\in \mathcal{C}_i$, there exists at least one vertex on the $(u,v)$-path with colour $c_j$. 
}\end{definition}

Note that for $k=1$, the notion $J^c$-connectivity means $J$-connectivity. In such cases, the terminology may be used interchangeably. Similarly, for $k = 1$, a $J$-colouring of $G$ is equivalent to a $J^c$-colouring of $G$. 

\ni Then, we have

\begin{corollary}\label{Cor-3.2}
If a graph $G$ has a component $G_i$ which has $\J(G_i) \geq 3$ and $G_i$ is $J$-connected, then $\delta(G_i) \geq 2$.
\end{corollary}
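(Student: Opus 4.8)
The plan is to argue by contradiction: suppose $G_i$ is $J$-connected with $\J(G_i) \geq 3$, yet $\delta(G_i) \leq 1$. Since $G_i$ is connected and (by the standing convention on nontriviality being relaxed, but here $\J(G_i) \geq 3$ forces $\nu(G_i) \geq 3$) has order at least three, we may pick a vertex $v$ of degree $\delta(G_i)$. If $\delta(G_i) = 0$ then $G_i$ is a single isolated vertex, contradicting $\nu(G_i) \geq 3$; so the only case to rule out is $\delta(G_i) = 1$, i.e.\ $G_i$ has a pendant vertex $v$ with unique neighbour $u$.

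First I would recall what $J$-rainbow connectivity demands. By Definition \ref{Def-3.1}, for every pair of distinct vertices $x, y \in V(G_i)$ there must be an $(x,y)$-path that uses all $\J(G_i)$ colours of the $J$-colouring $\mathcal{C}_i$ (since for $k=1$ a $J$-colouring is a $J^c$-colouring, and the colour set here is $\mathcal{C}_i$ with $|\mathcal{C}_i| = \J(G_i) \geq 3$). Now take $x = v$, the pendant vertex, and let $y$ be any other vertex. Every path starting at $v$ must use the edge $vu$ as its first edge, so the path is $v, u, \dots, y$. The crucial observation is that on this path the vertex $v$ contributes only its own colour $\varphi(v)$, and then the path continues from $u$; so in fact the sub-path from $u$ to $y$ must already carry all colours of $\mathcal{C}_i$ \emph{except possibly} $\varphi(v)$, and the whole $(v,y)$-path carries all of $\mathcal{C}_i$. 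That part is fine and gives no contradiction by itself. The real leverage comes from looking at a $(v,y)$-path where $y$ is chosen so that the colour $\varphi(v)$ is forced to be repeated or the colour count is forced below $\J(G_i)$ — specifically, I would choose $y$ to be a vertex whose colour equals $\varphi(v)$, or argue about the length constraint: a path visiting $\J(G_i) \geq 3$ distinct colours needs at least $\J(G_i) \geq 3$ vertices, hence the pendant $v$ is an endpoint of a path of length $\geq 2$, which is allowed; so length alone is not the obstruction either.

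The key step is therefore to exploit the rainbow-neighbourhood structure forced by the $J$-colouring at the pendant vertex $v$. Since $G_i$ admits a $J$-colouring with $\J(G_i) \geq 3$, every vertex — in particular $v$ — yields a rainbow neighbourhood, meaning $N[v] = \{v, u\}$ contains a vertex of each of the $\J(G_i) \geq 3$ colour classes. But $|N[v]| = 2 < 3 \leq \J(G_i)$, which is already a contradiction without even invoking rainbow connectivity. So I would streamline the argument: a pendant vertex $v$ has $|N[v]| = 2$, and to yield a rainbow neighbourhood it would need $|N[v]| \geq \chi(G_i)$; since $\J(G_i) \geq 3$ and (by Observation \ref{Obs-1.3}) $\chi(G_i) \leq \J(G_i)$ is not immediately enough, I instead note directly that a $J$-colouring of $G_i$ uses $\J(G_i) \geq 3$ colours and \emph{every} vertex yields a rainbow neighbourhood in that colouring, forcing $|N[v]| \geq \J(G_i) \geq 3$, contradicting $|N[v]| = 2$. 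Hence $G_i$ has no pendant vertex, and since it is not a single isolated vertex, $\delta(G_i) \geq 2$.

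The main obstacle, or rather the point requiring care, is bookkeeping about \emph{which} colour count governs the rainbow neighbourhood: Definition \ref{Defn-1.1} phrases the rainbow neighbourhood condition with respect to a chromatic colouring using $\chi(G)$ colours, whereas a $J$-colouring maximises to $\J(G)$ colours, and I must make sure the relevant statement is that in the $J$-colouring itself every closed neighbourhood meets all $\J(G_i)$ colour classes (which is exactly the defining property of a $J$-colouring being a valid colouring in which every vertex yields a rainbow neighbourhood \emph{of that colouring}). Once that is pinned down, the inequality $|N[v]| \geq \J(G_i) \geq 3 > 2 = |N[v]|$ for a pendant $v$ closes the argument cleanly, and the hypothesis of $J$-connectivity is actually not needed — though if the paper intends the rainbow-neighbourhood condition strictly relative to a $\chi$-colouring, the $J$-connectivity hypothesis would be the fallback tool, via the requirement that some $(v,y)$-path realise all $\J(G_i)$ colours while being forced through the single edge at $v$.
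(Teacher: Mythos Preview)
Your argument is correct, but it follows a different line from the paper. The paper uses the $J$-connectivity hypothesis directly: in your notation, if $v$ is a pendant with unique neighbour $u$, then the only $(v,u)$-path is the edge $vu$ itself, which carries just two colours and so cannot be a rainbow path when $\J(G_i)\geq 3$. You explored this connectivity route but dismissed it after considering paths from $v$ to an arbitrary vertex $y$; the paper simply takes $y=u$, where the path is forced to have length~$1$ and the contradiction is immediate. Your alternative via the closed-neighbourhood count --- $|N[v]|=2<\J(G_i)$ violates the rainbow-neighbourhood requirement built into the $J$-colouring --- is clean and in fact proves more: it gives $\delta(G_i)\geq \J(G_i)-1\geq 2$ from the mere existence of a $J$-colouring, so the $J$-connectivity hypothesis is redundant, as you suspected.
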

\begin{proof}
Assume that a component $G_i$ of $G$ has $\J(G_i) \geq 3$ and $G_i$ is $J$-connected. Assume that  $\delta(G_i)=1<2$. Then, at least one pendant vertex say $u$ exists with neighbour, say $v$. Since the $(u,v)$-path is unique the vertices $u,v$ are not $J$-rainbow connected which contradicts the assumption. Therefore, $\delta(G_i)\geq 2$, completing the proof.
\end{proof}

\begin{theorem}\label{Thm-3.3}
If a graph $G$ is $J^c$-rainbow connected then for all pairs of distinct vertices $u,v \in V(G_i)$, $1\leq i \leq k$, there exists a $(u,v)$-path of length at least $\J(G_i)-1$.
\end{theorem}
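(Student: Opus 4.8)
The plan is to derive the statement directly from the definition of $J^c$-rainbow connectivity. Fix a graph $G$ that is $J^c$-rainbow connected and fix any component $G_i$ together with any pair of distinct vertices $u,v \in V(G_i)$. By Definition \ref{Def-3.1}, there is a $(u,v)$-path $P$ in $G_i$ such that for every colour $c_j \in \mathcal{C}_i$ (where $\mathcal{C}_i$ is the colour set of the relevant $J$-colouring of $G_i$) at least one vertex of $P$ carries colour $c_j$. Since the colour set of a $J$-colouring of $G_i$ has cardinality $\J(G_i)$, the path $P$ must contain at least $\J(G_i)$ distinct vertices.

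Next I would translate the vertex count into a length bound. A path on $m$ vertices has length $m-1$; hence $P$, having at least $\J(G_i)$ vertices, has length at least $\J(G_i)-1$. This is exactly the claimed bound. I would also remark that the argument uses nothing about $u,v$ beyond their distinctness, and applies uniformly for every $i$, which is why the conclusion is stated for all components and all such vertex pairs. For $k=1$ this recovers the corresponding $J$-connectivity statement, consistent with the interchangeability noted after Definition \ref{Def-3.1}.

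The only subtlety — and it is minor — is making sure the colour set witnessing rainbow connectivity is genuinely the full $J$-colouring palette $\mathcal{C}_i$ of $G_i$, not a smaller set: this is guaranteed because Definition \ref{Def-3.1} presupposes that $G$ admits a $J^c$-colouring, so each $G_i$ carries a $J$-colouring with $|\mathcal{C}_i| = \J(G_i)$, and the rainbow-connectivity condition is imposed with respect to that colouring. Once this is pinned down, the proof is a one-line counting argument; I do not anticipate any real obstacle. If one wanted to be fully careful about the case $\J(G_i)=1$ (an edgeless component), the bound reads ``length at least $0$,'' which is vacuously true for the trivial path, so no separate case analysis is needed.
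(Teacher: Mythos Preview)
Your argument is correct and, in fact, cleaner than the paper's. You go straight from Definition~\ref{Def-3.1} to a vertex count: the rainbow $(u,v)$-path must hit all $\J(G_i)$ colours, hence has at least $\J(G_i)$ vertices, hence length at least $\J(G_i)-1$. No case analysis is needed.

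The paper's proof takes a slightly different route. It first disposes of the complete-graph case separately, and for $G_i\neq K_{n_i}$ it argues by splitting on whether the endpoints satisfy $c(u)\neq c(v)$ (forcing length at least $\J(G_i)-1$) or $c(u)=c(v)$ (forcing length at least $\J(G_i)$), and then takes the minimum over these two possibilities to get the stated bound. Your uniform counting argument subsumes both cases at once and avoids the superfluous $K_{n_i}$ split; the paper's version has the marginal advantage of making explicit that the bound is sometimes slack (namely when $c(u)=c(v)$), but this observation is not used anywhere. Substantively the two proofs are equivalent, with yours being the more economical presentation.
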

\begin{proof}
For $G_i = K_{n_i}$ the result is obvious. Without loss of generality, consider any specific $i$, $1\leq i \leq k$ for which $G_i \neq K_{n_i}$. For any pair of distinct vertices $u,v \in V(G_i)$, it is possible to have $c(u) \neq c(v)$ and hence a $(u,v)$-path of length at least $\J(G_i)-1$ must exist to provide for the possibility that each colour appears at least once in such paths. Otherwise, $G_i$ is not $J$-rainbow connected. However, it is possible that $c(u)=c(v)$ and hence a $(u,v)$-path of length at least $J(G_i)$ must exist to provide for the possibility that each colour appears at least once in such path. Hence, if $G_i$ fails to hold the three possible conditions for all values of $i$, then the graph $G$ is not $J^c$-rainbow connected. From the absolute minimum of these permissible minimum path lengths, the result follows immediately.
\end{proof}

\ni The following theorem discusses the $J^c$-rainbow connectivity of forests.

\begin{proposition}\label{Prop-3.4}
A forest $F=\bigcup\limits_{i=1}^{k}T_i$,each $T_i$ being a tree, is $J^c$-rainbow connected. 
\end{proposition}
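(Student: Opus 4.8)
The plan is to show that every tree $T_i$ admits a $J$-colouring and that, under such a colouring, any two distinct vertices can be joined by a path containing every colour at least once; then the forest $F=\bigcup_{i=1}^{k}T_i$ is $J^c$-rainbow connected by Definition \ref{Def-3.1}, since the condition is imposed component-wise. So it suffices to treat a single tree $T=T_i$.

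First I would recall that a tree $T$ is acyclic, hence by Theorem \ref{Thm-2.5} (and the surrounding discussion) $T$ admits a $J$-colouring; in fact for a tree the situation is very rigid. If $T=K_1$ or $T=K_2$ the claim is trivial. Otherwise, I would argue that $\J(T)=2$: a tree is bipartite so $\chi(T)=2$, and by Observation \ref{Obs-1.3} $\J(T)\geq 2$; on the other hand any proper colouring with $3$ or more colours cannot have every vertex yielding a rainbow neighbourhood, because a pendant vertex $u$ with neighbour $v$ has $N[u]=\{u,v\}$, which meets at most two colour classes, so $\J(T)=2$ whenever $T$ has a pendant vertex — which every tree on $\geq 2$ vertices does. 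Thus under a $J$-colouring of $T$ exactly the two colours $c_1,c_2$ are used, and since $T$ is connected with at least one edge, adjacent vertices receive the two different colours.

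Next I would verify the rainbow-connectivity condition for this $2$-colouring. Take distinct $u,v\in V(T)$ and let $P$ be the unique $u$–$v$ path in $T$. If $P$ has length $\geq 1$ it contains an edge, whose two endpoints carry colours $c_1$ and $c_2$ respectively; hence both colours of $\mathcal{C}_i=\{c_1,c_2\}$ appear on $P$, which is exactly what Definition \ref{Def-3.1} requires. Since $u\neq v$, $P$ always has length $\geq 1$, so the condition holds for every pair. Therefore each $T_i$ is $J$-rainbow connected, and consequently $F$ is $J^c$-rainbow connected.

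The only real point needing care is the claim $\J(T)=2$, i.e. that a tree on at least two vertices cannot carry a $J$-colouring with more than two colours; this follows from the pendant-vertex observation above, and is consistent with Corollary \ref{Cor-3.2} (which says $\J(G_i)\geq 3$ together with $J$-connectivity forces $\delta(G_i)\geq 2$, impossible for a tree). One should also dispose of the degenerate components $K_1$ and $K_2$ explicitly, as in Theorem \ref{Thm-3.3}, since there the unique path is short but still meets every colour class. No genuine obstacle arises; the argument is essentially the same as the single-component case in \cite{KS1}, extended over the components via the component-wise definition.
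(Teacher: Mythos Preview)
Your proposal is correct and follows essentially the same approach as the paper's proof: both reduce to the fact that $\J(T_i)\le 2$ for every tree, handle the trivial component $K_1$ separately, and then observe that on a tree with at least two vertices any $(u,v)$-path contains an edge and hence both colours. You supply more justification for the bound $\J(T)=2$ (via the pendant-vertex argument) than the paper, which simply asserts $\J^c(F)\in\{1,2\}$, but the underlying strategy is identical.
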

\begin{proof}
For any $k \in \N$, being the disjoint union of any $k$ trees, the forest $F$ has $\J^c(F)=1$ or $\J^c(F)=2$. But note that only the trivial tree $K_1$ can have $\J^c(F)=1$. Hence, the result follows from the fact that a vertex is inherently adjacent to itself, it is inherently connected to itself. We now further the proof for $T_i$ of order at least $2$. For each pair of distinct vertices $u,v \in V(T_i)$, there exists a $(u,v)$-path such that for each of the colours $c_1$ and $c_2$, there exists at least one vertex $w$ on the $(u,v)$-path such that, colour of $w$ is $c_1$ or $c_2$. Therefore, it follows that a forest $F$ is $J^c$-rainbow connected. 
\end{proof}

\begin{proposition}\label{Prop-3.5}
The graph $G=\bigcup\limits_{i=1}^{k}K_{n_i}$ is $J^c$-rainbow connected. 
\end{proposition}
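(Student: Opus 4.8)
The plan is to exploit the fact that in a complete graph every proper colouring must assign pairwise distinct colours. First I would record the structure of a $J$-colouring of each component: since all pairs of vertices of $K_{n_i}$ are adjacent, any proper colouring uses $n_i$ distinct colours, so $\chi(K_{n_i})=n_i$; and since $N[v]=V(K_{n_i})$ for every $v$, each closed neighbourhood contains a vertex of every colour. Hence this colouring is already a (maximal) $J$-colouring and $\J(K_{n_i})=n_i$, so the colour set $\mathcal{C}_i$ has exactly $n_i=\nu(K_{n_i})$ elements, one per vertex.

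Next, for each $i$ and each pair of distinct vertices $u,v\in V(K_{n_i})$, I would produce a $(u,v)$-path meeting the requirement of Definition \ref{Def-3.1}. Because $\mathcal{C}_i$ has $n_i$ elements and $K_{n_i}$ has only $n_i$ vertices, each carrying a distinct colour, a $(u,v)$-path contains a vertex of every colour of $\mathcal{C}_i$ if and only if it is a Hamiltonian $(u,v)$-path. Since $K_{n_i}$ is complete, for $n_i\geq 2$ one lists the remaining $n_i-2$ vertices in any order $w_1,w_2,\ldots,w_{n_i-2}$, and $u,w_1,w_2,\ldots,w_{n_i-2},v$ is a path (all required edges are present) visiting all $n_i$ vertices, hence meeting every colour of $\mathcal{C}_i$. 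The degenerate case $n_i=1$ yields no pair of distinct vertices, and for $n_i=2$ the single edge $uv$ already carries both colours.

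Finally, since this holds simultaneously for every component $G_i=K_{n_i}$, the disjoint union $G=\bigcup_{i=1}^{k}K_{n_i}$ is $J^c$-rainbow connected by Definition \ref{Def-3.1}.

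I do not expect a real obstacle here; the only point needing care is the observation that the required path is forced to be Hamiltonian, which is precisely why the completeness of $K_{n_i}$ suffices, together with the minor book-keeping of the cases $n_i\in\{1,2\}$.
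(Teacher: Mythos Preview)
Your argument is correct and follows essentially the same approach as the paper: both proofs note that $\J(K_{n_i})=n_i$ and then use the existence of a Hamilton $(u,v)$-path in $K_{n_i}$ to exhibit a path meeting every colour. Your version is slightly more explicit (you actually construct the Hamilton path and remark that any qualifying path is forced to be Hamiltonian), but the underlying idea is identical.
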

\begin{proof}
The same reasoning found in the proof of Proposition \ref{Prop-3.4} can be applied to trivial $K_1$ components. Consider only $K_{n_i},\ n_i \geq 2$. Without loss of generality, consider any specific $i$, $1\leq i \leq k$. Obviously, $\J^c(K_{n_i})=n_i$. Since there exists a Hamilton path between each pair of distinct vertices $u,v \in V(K_{n_i})$, there exists a $(u,v)$-path such that for each colour $c_j \in \{c_1, c_2, \ldots, c_{n_i}\}$, there exists at least one vertex $w$ on the $(u,v)$-path such that, $c(w) =c_j$. Therefore, it follows that $K_{n_i}$ is $J$-rainbow connected. Then, by induction, the result follows immediately for the graph $G=\bigcup\limits_{i=1}^{k}K_{n_i}$.
\end{proof}

In \cite{NKS} it was shown that a cycle $C_n$ is $J$-colourable if and only if $n \equiv 0({\rm mod}\ 2)$ or $n\equiv 0({\rm mod}\ 3)$. By similar reasoning found in the proof of Proposition \ref{Prop-3.5}, it follows that

\begin{corollary}\label{Cor-3.6}
A graph $G=\bigcup\limits_{i=1}^{k}C_{n_i}$ is $J^c$-rainbow connected if and only if $C_{n_i}$ are $J$-colourable for $1\leq i \leq k$. 
\end{corollary}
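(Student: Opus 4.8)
The plan is to mirror the structure of Proposition~\ref{Prop-3.5}, exploiting the known characterisation of $J$-colourable cycles together with the fact that a cycle admits two internally disjoint paths between any pair of vertices. First I would dispose of the trivial and degenerate components exactly as before: if some $C_{n_i}$ is $J$-colourable, then $n_i \geq 3$, so the triviality remarks of Proposition~\ref{Prop-3.4} are not needed, but the overall inductive framework on $k$ is. Thus, by the same induction on $k$ used in Proposition~\ref{Prop-3.5}, it suffices to prove the single-component statement: a cycle $C_n$ is $J$-rainbow connected if and only if $C_n$ is $J$-colourable (equivalently $n \equiv 0 \pmod 2$ or $n \equiv 0 \pmod 3$).

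For the forward direction of the component statement, I would argue contrapositively: if $C_n$ is not $J$-colourable then $G$ has a component admitting no $J$-colouring, hence no $J^c$-colouring, so $J^c$-rainbow connectivity is vacuously excluded (the definition of $J^c$-rainbow connectedness presupposes a $J^c$-colouring). For the reverse direction, assume every $C_{n_i}$ is $J$-colourable and recall from \cite{NKS} that then $\J(C_n) = 3$ when $n \equiv 0 \pmod 3$ (and $n$ odd) and $\J(C_n) = 2$ in the remaining cases, or more simply that $\J(C_n) \in \{2,3\}$. Fix a $J$-colouring of $C_n$ and a pair of distinct vertices $u,v$. The cycle splits into two $(u,v)$-paths $P_1, P_2$ whose lengths sum to $n$; at least one of them, say $P_1$, has length at least $\lceil n/2 \rceil$. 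Since $\J(C_n) \leq 3$ and $n \geq 3$, the longer path has at least $\lceil n/2 \rceil + 1 \geq 3$ vertices, so when $\J(C_n)=2$ it automatically contains both colours (a path on $\geq 2$ vertices in a proper $2$-colouring already does). When $\J(C_n) = 3$ I would need the sharper observation that along the cycle the three colour classes are arranged so that any sub-path of length $\geq 3$ meets all three — this follows because in the canonical $3$-colouring of a cycle whose length is a multiple of $3$ the colours repeat with period $3$, hence every three consecutive vertices carry all three colours; and if one of $P_1,P_2$ were too short to contain all three, the other (of length $\geq n - 2 \geq 3$) does. Choosing that path as the required $(u,v)$-path gives $J$-rainbow connectivity.

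The main obstacle I anticipate is the $\J(C_n) = 3$ case, i.e.\ $n$ odd and $3 \mid n$: here I must be careful that the chosen $J$-colouring really does have the periodic structure guaranteeing that a sufficiently long arc is rainbow, and that for \emph{every} pair $u,v$ at least one of the two arcs is long enough and correctly aligned. The clean way around this is to note that for a cycle $C_{3m}$ the (essentially unique) $J$-colouring assigns colours by residue class of position modulo $3$, so any arc of $3$ or more consecutive vertices is rainbow; given $u,v$, if the shorter arc between them has fewer than $3$ vertices then the complementary arc has at least $3m - 1 \geq 3$ vertices and is therefore rainbow, and otherwise either arc works. Once this is in hand, the general result for $G = \bigcup_{i=1}^{k} C_{n_i}$ follows by the same induction on $k$ as in Proposition~\ref{Prop-3.5}, invoking $\J^c(G) = \max_i \J(C_{n_i})$ and Definition~\ref{Def-3.1} component-wise. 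I would close by remarking that this is precisely the ``similar reasoning'' promised in the paragraph preceding the corollary, so the write-up can be kept brief.
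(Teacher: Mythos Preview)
Your proposal is correct and follows the same componentwise-induction skeleton the paper intends; the paper's own proof is a single sentence pointing back to Proposition~\ref{Prop-3.5}. Where you go beyond the paper is in actually supplying the missing detail: the Hamilton-path reasoning of Proposition~\ref{Prop-3.5} does not transfer verbatim to cycles, since two non-adjacent vertices of $C_n$ are not joined by a Hamilton path, and your periodicity observation for the $3$-colouring (any three consecutive vertices of $C_{3m}$ carry all three colours, so the longer arc between $u$ and $v$ is always rainbow) is precisely what makes the ``similar reasoning'' rigorous. One small correction that does not affect your argument: you identify the $\J(C_n)=3$ case with ``$n$ odd and $3\mid n$'', but in fact $\J(C_n)=3$ whenever $3\mid n$ (for instance $\J(C_6)=3$); since your periodic-colouring analysis already treats every $C_{3m}$ uniformly, the slip is harmless.
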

\begin{proof}
The result can be proved using the similar argument in the proof of Proposition \ref{Prop-3.5}.
\end{proof}

In \cite{NKS} it was shown that if a wheel $W_{n+1}$ admits a $J$-colouring then $\J(W_{n+1}) = 4$ if $n \equiv 0({\rm mod}\ 3)$, or $\J(W_{n+1})=3$ if $n\equiv 0({\rm mod}\ 2)$ and $n\not\equiv 0({\rm mod}\ 3)$. Hence, we have 

\begin{proposition}\label{Prop-3.7}
A graph $G=\bigcup\limits_{i=1}^{k}W_{n_i + 1}$ is $J^c$-rainbow connected if and only if $W_{n_i+1}$ are $J$-colourable for $1\leq i \leq k$. 
\end{proposition}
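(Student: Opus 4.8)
The plan is to follow the template of Proposition~\ref{Prop-3.5} and Corollary~\ref{Cor-3.6}: settle the forward implication by unwinding definitions, and prove the converse by showing that a single $J$-colourable wheel is $J$-rainbow connected and then lifting to the disjoint union by induction on $k$. For the forward direction, if $G$ is $J^c$-rainbow connected then by Definition~\ref{Def-3.1} it admits a $J^c$-colouring, and by Definition~\ref{Defn-2.1} each component $W_{n_i+1}$ then admits a $J$-colouring, i.e.\ is $J$-colourable.

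For the converse, suppose every $W_{n_i+1}$ is $J$-colourable. By the union-induction argument at the close of the proof of Proposition~\ref{Prop-3.5}, it suffices to treat one component; write $W:=W_{n_i+1}$ and fix a $J$-colouring of it. I would first record the shape of this colouring. The hub $h$ is adjacent to every rim vertex, so the colour of $h$ occurs on no other vertex, while the rim cycle $C_{n_i}$ carries the remaining $\J(W)-1$ colours. By the result quoted from \cite{NKS}, $\J(W)\in\{3,4\}$, and since $N[v_j]=\{v_{j-1},v_j,v_{j+1},h\}$ must be rainbow for every rim vertex $v_j$, the three vertices $v_{j-1},v_j,v_{j+1}$ must between them carry all $\J(W)-1$ rim colours; as $\J(W)-1$ divides $n_i$ in both cases, this forces the rim colouring to be the periodic pattern of period $\J(W)-1$ (up to rotation and reflection). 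The degenerate case $W_4=K_4$ is already covered by Proposition~\ref{Prop-3.5}, so I may assume $n_i\ge 4$.

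Next I would verify the condition in Definition~\ref{Def-3.1} for an arbitrary pair of distinct vertices $u,v\in V(W)$. If $u=h$ (or symmetrically $v=h$), take the path that leaves $h$ into a rim vertex and then runs along the rim to $v$; periodicity lets me choose the entry vertex so that the rim segment from it to $v$ consists of $\J(W)-1$ consecutive rim vertices, hence meets every rim colour, and $h$ supplies the hub colour, for a path of length $\J(W)-1$. If $u$ and $v$ are both rim vertices, take the path $u\to h\to w\to\cdots\to v$ built from the edges $uh$ and $hw$ followed by a rim segment of $\J(W)-1$ consecutive rim vertices ending at $v$, where $w$ is its far endpoint; the segment meets all rim colours and the detour through $h$ supplies the hub colour. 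In each case I would check that the vertices used are distinct — the only risk being that the chosen segment contains $u$, which is avoidable because the two candidate segments ending at $v$ overlap only in $v$ once $n_i\ge 2\J(W)-3$, which holds here — and that the path length is at least $\J(W)-1$, in agreement with Theorem~\ref{Thm-3.3}. This shows $W$ is $J$-rainbow connected, and the induction on $k$ then gives the claim for $G$.

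The step I expect to be the main obstacle is the bookkeeping in this case analysis for the smallest admissible wheels ($W_5$, $W_7$, $W_9$): one must confirm that the rim segment together with the detour through $h$ genuinely consists of distinct vertices, and that the colour which the rainbow neighbourhood convention actually assigns to $u$ and to $v$ is consistent with the segment chosen. Everything beyond that is routine, so I would keep the write-up brief, in the spirit of the analogous Propositions~\ref{Prop-3.4} and~\ref{Prop-3.5}.
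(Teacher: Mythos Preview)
Your argument is correct, but it takes a considerably more hands-on route than the paper's. The paper dispatches the converse in one line: it observes that $u_iv_1v_2\cdots v_{n_i}u_i$ is a Hamilton cycle of $W_{n_i+1}$, and (implicitly, exactly as in Proposition~\ref{Prop-3.5}) therefore any two vertices of the wheel are joined by a Hamilton path, which visits every vertex and hence every colour class; induction on $i$ finishes. Your approach instead unpacks the structure of the $J$-colouring --- the hub receiving a unique colour and the rim carrying a periodic pattern of period $\J(W)-1$ --- and then manufactures, case by case, short rainbow paths of length roughly $\J(W)$ by detouring through the hub and a minimal rim segment. What this buys you is paths of essentially optimal length, saturating the lower bound of Theorem~\ref{Thm-3.3}, whereas the paper's Hamilton-cycle shortcut gives paths of length $n_i$; the price is the case analysis and the small-wheel bookkeeping you correctly flag as the delicate step. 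Both are valid, but since the statement only asks for \emph{some} rainbow $(u,v)$-path, the Hamiltonicity observation is the cleaner tool here.
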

\begin{proof}
If $W_{n_i + 1}$ are $J$-colourable for $1\leq i \leq k$, consider without loss of generality any specific $i$, $1\leq i \leq k$. Let the central vertex be $u_i$ and the cycle vertices be, $v_1,v_2,v_3,\ldots, v_{n_i}$. Since, $u_iv_1v_2v_3 \ldots v_{n_i}u_i$ is a Hamilton cycle of $W_{n_i + 1}$ it follows that a wheel is $J$-rainbow connected. Through immediate induction the result follows for, $G = \bigcup\limits_{i=1}^{k}W_{n_i + 1}$.

\ni The converse part follows trivially from the definition of $J^c$-rainbow connectivity.
\end{proof}

The following result discusses the $J$-rainbow connectivity of a complete $\ell$-partite graph.

\begin{proposition}\label{Prop-3.8}
For $l\le 1$, a complete $l$-partite graph $K_{n_1,n_2,n_3,\ldots,n_l}$ is $J$-rainbow connected.
\end{proposition}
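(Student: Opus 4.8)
The plan is to first pin down the normal form of any $J$-colouring of a complete $\ell$-partite graph, and then to exhibit, for every pair of distinct vertices, an explicit path carrying all the colours. I read the hypothesis as $\ell\ge 2$, so that $G=K_{n_1,n_2,\ldots,n_\ell}$ is connected; the degenerate value $\ell=1$ makes $K_{n_1}$ the edgeless graph $\mathfrak{N}_{n_1}$, which is handled exactly as the trivial components in the proofs of Propositions \ref{Prop-3.4} and \ref{Prop-3.5}. Write $V_1,\ldots,V_\ell$ for the partite sets, with $|V_i|=n_i\ge 1$, and recall that in a complete $\ell$-partite graph any two vertices lying in \emph{different} partite sets are adjacent, while any two vertices lying in the \emph{same} partite set are non-adjacent.

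\textbf{Step 1: the $J$-colouring.} I would first show that in any $J$-colouring of $G$ each partite set is monochromatic and distinct partite sets carry distinct colours, so that after relabelling $\mathcal{C}=\{c_1,\ldots,c_\ell\}$ with $c(x)=c_i$ for every $x\in V_i$; in particular $\J(G)=\ell$. Indeed, since every vertex of $V_j$ is adjacent to every vertex outside $V_j$, no colour can occur in two different partite sets, so each colour class lies wholly inside a single $V_i$. If some $V_i$ used two colours, then a vertex $v\in V_i$ of one of those colours would have $N[v]\cap V_i=\{v\}$ and hence would miss the other colour of $V_i$ (which occurs nowhere else), contradicting that $v$ yields a rainbow neighbourhood. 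Thus each $V_i$ is monochromatic, which already bounds the number of colours by $\ell$; conversely the colouring giving all of $V_i$ the colour $c_i$ is proper, and for $v\in V_i$ we have $N[v]\supseteq\{v\}\cup\bigcup_{j\neq i}V_j$, which meets every colour $c_1,\ldots,c_\ell$, so this colouring is a $J$-colouring and $\J(G)=\ell$.

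\textbf{Step 2: rainbow connectivity.} Now I would verify Definition \ref{Def-3.1} directly, distinguishing whether the two distinct vertices $u,v$ lie in the same partite set. If $u\in V_i$ and $v\in V_j$ with $i\neq j$, fix an ordering $i=i_1,i_2,\ldots,i_\ell=j$ of $\{1,\ldots,\ell\}$, set $w_1=u$, $w_\ell=v$, and choose $w_m\in V_{i_m}$ arbitrarily for $1<m<\ell$; consecutive $w_m$ lie in different partite sets, hence are adjacent, so $w_1w_2\cdots w_\ell$ is a $(u,v)$-path whose vertices carry the colours $c_{i_1},\ldots,c_{i_\ell}$, i.e. all of $\mathcal{C}$. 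If instead $u,v\in V_i$, fix an ordering $j_1,\ldots,j_{\ell-1}$ of $\{1,\ldots,\ell\}\setminus\{i\}$, choose $w_m\in V_{j_m}$ arbitrarily, and take the path $u\,w_1w_2\cdots w_{\ell-1}\,v$; again consecutive vertices lie in different partite sets and so are adjacent, and the colours occurring on the path are $c_i$ (from $u$ and $v$) together with $c_{j_1},\ldots,c_{j_{\ell-1}}$, which is again all of $\mathcal{C}$. Since $k=1$, this shows $G$ is $J$-rainbow connected.

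I do not expect a genuine obstacle: the construction is explicit, and the two cases mirror the two permissible path lengths ($\ell-1$ and $\ell$) anticipated in Theorem \ref{Thm-3.3}. The one point that really needs care is that $J^c$-rainbow connectivity is stated relative to the colour set $\mathcal{C}$ of the $J$-colouring, so it is essential to carry out Step 1 first: only once we know each partite set is monochromatic and $|\mathcal{C}|=\ell$ does the case analysis in Step 2 demonstrably hit "every colour of $\mathcal{C}$" on the path.
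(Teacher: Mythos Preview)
Your argument is correct, but it takes a genuinely different route from the paper. The paper states $\J(K_{n_1,\ldots,n_l})=l$ without justification, handles $l=1$ exactly as you do, and for $l\ge 2$ appeals to the join decomposition $K_{n_1,\ldots,n_l}=(\cdots((\mathfrak{N}_{n_1}+\mathfrak{N}_{n_2})+\mathfrak{N}_{n_3})+\cdots+\mathfrak{N}_{n_l})$, asserting the bipartite case is $J$-rainbow connected and then invoking ``immediate induction'' on $l$. By contrast, you first characterise the $J$-colouring completely (each partite set monochromatic, distinct sets distinct colours) and then build an explicit rainbow $(u,v)$-path in each of the two cases. Your approach is more self-contained: the paper's inductive step is not spelled out and is slightly delicate, since enlarging from $l-1$ to $l$ parts adds a new colour, so a rainbow path in the smaller graph need not remain rainbow in the larger one without further argument. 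On the other hand, the paper's join viewpoint is more structural and hints at a broader principle (joining with a null graph preserves the property), which your direct construction does not make visible.
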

\begin{proof}
For $l\geq 1$, a complete $l$-partite graph $K_{n_1,n_2,n_3,\ldots,n_l}$ is $J$-colourable with $\J(K_{n_1,n_2,n_3,\ldots,n_l})=l$. For $l=1$, we have the null graph $\mathfrak{N}_{n_i}$ and since any vertex is inherently adjacent to itself, it is inherently connected to itself. Therefore, any $P_1$ in respect of a vertex of $\mathfrak{N}_{n_i}$ is the desired path.

For $l\geq 2$, a complete $l$-partite graph, $K_{n_1,n_2,n_3,\ldots,n_l}=(\ldots ((\mathfrak{N}_{n_1}+\mathfrak{N}_{n_2}) + \mathfrak{N}_{n_3})+\ldots + \mathfrak{N}_{n_l})$. Since the complete bipartite graph $\mathfrak{N}_{n_1}+\mathfrak{N}_{n_2}$ is $J$-rainbow connected, it follows through immediate induction that $K_{n_1,n_2,n_3,\ldots,n_l}$ is $J$-rainbow connected.   
\end{proof}

The next result is a direct consequence of Proposition \ref{Prop-3.8}.

\begin{corollary}\label{Cor-3.8}
A graph $G =\bigcup \limits_{i=1}^{k}K_{n_1,n_2,n_3,\ldots,n_{l_i}}$,  is $J^c$-rainbow connected. 
\end{corollary}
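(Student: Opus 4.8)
The plan is to derive Corollary~\ref{Cor-3.8} as an immediate consequence of Proposition~\ref{Prop-3.8}, exactly as the sentence preceding the statement announces. The structure is the same two-layer argument that appears in Propositions~\ref{Prop-3.5} and \ref{Prop-3.7}: first establish $J$-rainbow connectivity of a single component, then lift this to the disjoint union by induction on the number of components $k$. Since Proposition~\ref{Prop-3.8} already gives us that every complete $\ell_i$-partite graph $K_{n_1,n_2,\ldots,n_{\ell_i}}$ is $J$-rainbow connected, the single-component step is free, and essentially nothing remains except to invoke Definition~\ref{Def-3.1}.

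First I would note that $G$ admits a $J^c$-colouring: by Definition~\ref{Defn-2.1} this holds iff each component admits a $J$-colouring, and each complete multipartite component is $J$-colourable (with $\J$-value equal to the number of parts, as recorded inside the proof of Proposition~\ref{Prop-3.8}). So the colouring hypothesis of Definition~\ref{Def-3.1} is satisfied. Then I would fix an arbitrary index $i$ with $1\le i\le k$ and apply Proposition~\ref{Prop-3.8} to the component $K_{n_1,n_2,\ldots,n_{\ell_i}}$: for every pair of distinct vertices $u,v$ in that component there is a $(u,v)$-path meeting every colour class $c_j\in\mathcal{C}_i$. Since the component index $i$ was arbitrary, the defining condition of $J^c$-rainbow connectivity holds componentwise, which is all that is required. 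A one-line induction on $k$ (the base case $k=1$ being precisely Proposition~\ref{Prop-3.8}, and the inductive step adding one further component whose internal rainbow-connectivity is again supplied by Proposition~\ref{Prop-3.8}) completes the argument, mirroring the phrasing "through immediate induction'' used in the preceding propositions.

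There is no real obstacle here; the statement is deliberately a corollary. The only things to be careful about are bookkeeping: that $J^c$-rainbow connectivity is a strictly componentwise property (so no interaction between components needs to be analysed — indeed distinct components are not joined by any path at all, and Definition~\ref{Def-3.1} only quantifies over pairs $u,v$ lying in a common $G_i$), and that degenerate parts are handled, i.e.\ when some $\ell_i=1$ the component is a null graph $\mathfrak{N}_{n}$ and the trivial path $P_1$ at each vertex works, exactly as in the $l=1$ case inside the proof of Proposition~\ref{Prop-3.8}. With those remarks in place, the proof is a direct citation of Proposition~\ref{Prop-3.8} plus induction on $k$.

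\begin{proof}
The same reasoning used in the proofs of Proposition~\ref{Prop-3.5} and Proposition~\ref{Prop-3.7} applies. Each component $K_{n_1,n_2,\ldots,n_{\ell_i}}$ of $G$ is $J$-colourable, so by Definition~\ref{Defn-2.1} the graph $G$ admits a $J^c$-colouring. Without loss of generality, consider any specific $i$, $1\le i\le k$. By Proposition~\ref{Prop-3.8}, the component $K_{n_1,n_2,\ldots,n_{\ell_i}}$ is $J$-rainbow connected; that is, for each pair of distinct vertices $u,v$ in this component there exists a $(u,v)$-path on which every colour of $\mathcal{C}_i$ appears at least once (when $\ell_i=1$ the component is $\mathfrak{N}_{n_i}$ and the trivial path $P_1$ at a vertex suffices). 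Since $i$ was arbitrary, this holds for every component, and hence, by Definition~\ref{Def-3.1} and immediate induction on $k$, the graph $G=\bigcup\limits_{i=1}^{k}K_{n_1,n_2,\ldots,n_{\ell_i}}$ is $J^c$-rainbow connected.
\end{proof}
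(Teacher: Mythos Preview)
Your proposal is correct and matches the paper's approach: the paper gives no explicit proof but simply declares the corollary a ``direct consequence of Proposition~\ref{Prop-3.8}'', and your argument---apply Proposition~\ref{Prop-3.8} to each component and lift to the disjoint union by immediate induction, exactly as in Propositions~\ref{Prop-3.5} and~\ref{Prop-3.7}---is precisely the intended one-line derivation.
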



Next, we establish an important and interesting theorem which describes a necessary and sufficient condition for a graph which admits a $J^c$-colouring to be $J^c$-rainbow connected.

\begin{theorem}[Fundamental Theorem of $J^c$-Rainbow Connectivity]\label{Thm-3.9}
A graph $G=\bigcup\limits_{i=1}^{k}G_i$ which admits a $J^c$-colouring, (that is, a $J$-colouring for $G_i$, for all values of $i$) is $J^c$-rainbow connected if and only if for some $i$, $\J(G_i) \leq 2$ or each $G_i$, which has a cycle $C_n$ with $n\equiv 0({\rm mod}\ 3)$, has no pendant vertices.
\end{theorem}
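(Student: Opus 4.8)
The plan is to prove both directions by reducing to the component level and invoking the structural facts already assembled in this section. For the ``if'' direction, suppose that either some $G_i$ has $\J(G_i)\le 2$ or every $G_i$ containing a cycle $C_n$ with $n\equiv 0\pmod 3$ has no pendant vertices. I would split into cases according to the value of $\J(G_i)$ for a fixed but arbitrary $i$. If $\J(G_i)=1$, then $G_i$ is trivial and the single vertex is rainbow-connected to itself (the reasoning used in Proposition~\ref{Prop-3.4}). If $\J(G_i)=2$, then every pair of distinct vertices is joined by a path containing a vertex of each of the two colours, exactly as in Proposition~\ref{Prop-3.4}; note that $\J(G_i)\le 2$ need not hold for \emph{all} $i$, so I would argue that whenever $\J(G_i)\le 2$ that component is automatically $J$-rainbow connected, and whenever $\J(G_i)\ge 3$ the hypothesis forces $G_i$ to have no pendant vertex at a bad cycle, which — combined with Corollary~\ref{Cor-3.2} (so $\delta(G_i)\ge 2$) — gives enough connectivity (Hamilton-type paths through the ``dense'' pieces, cf. Propositions~\ref{Prop-3.5}, \ref{Prop-3.7}, \ref{Prop-3.8}) to exhibit a rainbow $(u,v)$-path for each pair. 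Since $J^c$-rainbow connectivity is by Definition~\ref{Def-3.1} just the conjunction over all $i$ of $J$-rainbow connectivity of $G_i$, collecting the cases finishes this direction.

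For the ``only if'' direction, I would argue by contrapositive: assume that the right-hand condition fails, i.e. for every $i$ we have $\J(G_i)\ge 3$, \emph{and} some component $G_j$ contains a cycle $C_n$ with $n\equiv 0\pmod 3$ and also has a pendant vertex $u$ with support $v$. The pendant vertex is the obstruction: the unique $(u,v)$-path is the single edge $uv$, which carries only two colours, while $\J(G_j)\ge 3$ means a rainbow $(u,v)$-path must realize at least three colours (here the role of the $C_n$ with $n\equiv 0\pmod 3$ is to guarantee, via the cited result of \cite{NKS} and Observation~\ref{Obs-1.3}, that $\J(G_j)$ is genuinely $\ge 3$ and the $J$-colouring is forced to spread across the component). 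Hence $u$ and $v$ are not $J$-rainbow connected in $G_j$, so $G_j$ is not $J$-rainbow connected, so $G$ is not $J^c$-rainbow connected. This is essentially the argument of Corollary~\ref{Cor-3.2} repackaged.

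The step I expect to be the main obstacle is making the ``if'' direction airtight when $\J(G_i)\ge 3$: one must show that \emph{absence of pendant vertices at a bad cycle} together with $\delta(G_i)\ge 2$ actually suffices to route, between \emph{every} pair of vertices, a path long enough and structured enough to pick up all $\J(G_i)$ colours under every (or at least some) $J$-colouring. This is clean for the named families (complete graphs, wheels, complete multipartite graphs, even cycles) but needs a uniform argument in general; I would handle it by first using Theorem~\ref{Thm-3.3} to see that paths of the required length exist combinatorially, and then using the 2-connectivity supplied by $\delta(G_i)\ge 2$ (and a case check on whether $G_i$ contains a $C_n$ with $n\equiv 0\pmod 3$ at all) to show such a path can in fact be chosen rainbow. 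The subtlety is that "$\J(G_i)\le 2$ for some $i$'' is, as literally written, a hypothesis about a single index, so I would take care to state and use the genuinely intended reading, namely that each component individually satisfies one of the two alternatives, and flag this if the sharper global reading is what the authors meant.
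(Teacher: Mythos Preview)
Your treatment of the case $\J(G_i)\le 2$ matches the paper's part~(i) essentially verbatim. The divergence is in how the second alternative is handled. The paper does \emph{not} use the no-pendant-vertex clause as a structural input for building rainbow paths; instead, its part~(ii) argues in the opposite direction: if a component $G_i$ contains a cycle $C_n$ with $n\equiv 0\pmod 3$ then $\J(G_i)\ge 3$, and a component with $\J(G_i)\ge 3$ cannot carry a pendant vertex while still admitting a $J$-colouring (the pendant vertex would see only two colours), contradicting the standing hypothesis that $G$ admits a $J^c$-colouring. Thus the paper treats ``no pendant vertices'' as an automatic consequence of the $J^c$-colouring hypothesis, not as an assumption to exploit; after recording this it simply declares the proof complete, without supplying the rainbow-path construction you outline for $\J(G_i)\ge 3$ and without a separate argument for the ``only if'' direction.

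Your proposal is therefore considerably more detailed than what the paper records: your contrapositive for the converse (via the pendant-vertex obstruction, repackaging Corollary~\ref{Cor-3.2}) and your explicit flag that the $\J(G_i)\ge 3$ forward case needs a genuine path-routing argument both go beyond the paper's two-sentence treatment. The worry you raise about the literal ``for some $i$'' reading is also legitimate and is not clarified in the paper.
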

\begin{proof}
(i) If $J(G_i) \leq 2$  then $G_i$ is either a trivial component or the adjacent vertices on a $(u,v)$-path must alternate between the colours $c_1,c_2$. Therefore, the result follows immediately.\\ 
(ii) If $G_i$ for some $i$ has a cycle $C_n$, with $n\equiv 0({\rm mod}\ 3)$, then $\J(G_i) \geq 3$. Therefore, $G_i$ cannot have pendant vertices, since otherwise, it cannot admit a $J$-colouring. This contradicts the hypothesis. Hence, the proof is complete.
\end{proof}

\begin{definition}\label{Defn-3.10}{\rm 
A graph $G$ is said to be \textit{$\chi$-rainbow connected} if for each pair of distinct vertices $u,v \in V(G_i)$, $\forall i$, there exists a $(u,v)$-path in $G_i$ such that there exists at least one vertex $w$ on the $(u,v)$-path with colour $c_j$, for each colour $c_j \in \mathcal{C}_i$.
}\end{definition}

The following theorem describes the relation between $J^c$-colouring and $\chi$-rainbow connectivity of a graph $G$.

\begin{theorem}[Fundamental Theorem of $J^c$-Colouring]\label{Thm-3.11}
A graph $G=\bigcup\limits_{i=1}^{k}G_i$ admits a $J^c$-colouring if and only if $G$ is $\chi$-rainbow connected.
\end{theorem}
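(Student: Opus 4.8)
The plan is to prove the two directions of the biconditional separately, leaning on the characterization results already established, namely Theorem \ref{Thm-2.6} and Theorem \ref{Thm-2.7}, together with Definition \ref{Defn-3.10}. The key observation is that $\chi$-rainbow connectivity as defined in Definition \ref{Defn-3.10} is, component by component, exactly the statement that with respect to some $\chi^-_i$-colouring of $G_i$, every pair $u,v \in V(G_i)$ is joined by a $(u,v)$-path meeting every colour class of $\mathcal{C}_i$. I would first record this reduction explicitly so that the whole argument can be carried out on a single arbitrary component $G_i$ and then lifted to $G=\bigcup_{i=1}^{k}G_i$ by the same finite-maximum/finite-minimum bookkeeping used in the proofs of Theorems \ref{Thm-2.6} and \ref{Thm-2.7}.

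For the forward direction, suppose $G$ admits a $J^c$-colouring. By Theorem \ref{Thm-2.7}, there is a $\chi^-_i$-colouring of each $G_i$ in which every vertex $v \in V(G_i)$ yields a rainbow neighbourhood, i.e. the closed neighbourhood $N[v]$ already contains all $\chi(G_i)$ colours. I would then show that in such a colouring any two distinct vertices $u,v$ are connected by a path hitting every colour: take a shortest $u$–$v$ path, and if some colour $c_j \in \mathcal{C}_i$ is missing from it, use the fact that an interior or endpoint vertex $x$ of the path has a neighbour coloured $c_j$ (since $x$ yields a rainbow neighbourhood) to splice a short detour through that neighbour into the path, repeating until all colours appear. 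This produces the required $(u,v)$-path in $G_i$, so $G$ is $\chi$-rainbow connected per Definition \ref{Defn-3.10}.

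For the converse, suppose $G$ is $\chi$-rainbow connected. Fix $i$ and the associated $\chi^-_i$-colouring. I would argue that every vertex $v \in V(G_i)$ yields a rainbow neighbourhood under this colouring: pick any other vertex $u$ (if $G_i$ is trivial the claim is immediate by the convention that $\J(K_1)=1$), take the guaranteed $(u,v)$-path meeting all colours, and look at the vertex on that path at distance $1$ from $v$ together with $v$ itself — more carefully, one shows that the colours not appearing on $v$'s neighbours would force a colour to be "unreachable near $v$", contradicting that every colour appears on every such path; choosing $u$ suitably (for instance a vertex realizing a missing colour, or iterating over all vertices) pins down that $N[v]$ must contain all $\chi(G_i)$ colours. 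Hence by Theorem \ref{Thm-2.7} (the "if" direction, via Lemma \ref{Lem-2.8}(ii)) the colouring of $G_i$ can be maximised to a $J$-colouring, and since this holds for every $i$ and $\max\{\max \J(G_i): \text{over all } i\}$ exists, $G$ admits a $J^c$-colouring.

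The main obstacle I anticipate is the converse direction, specifically making precise the claim that $\chi$-rainbow connectivity of a single component forces every vertex to yield a rainbow neighbourhood. The forward direction's path-splicing is routine, and the lifting from one component to the union is purely formal; but in the converse one must rule out the possibility that a colour is missing from $N[v]$ yet still always attainable "far away" on some path between other vertex pairs. The cleanest fix is to test the condition on pairs $(u,v)$ where $u$ is chosen so that the $(u,v)$-path is forced to pass through $N[v]$ in a controlled way — e.g. taking $u$ adjacent to $v$ so the path's first edge already lies in $N[v]$, then doing induction on the number of missing colours exactly as in the final paragraph of the proof of Theorem \ref{Thm-2.7}. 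I would model that inductive step on that earlier proof to keep the argument self-consistent with the rest of the paper.
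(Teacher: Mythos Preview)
Your overall strategy---reduce each direction to the rainbow-neighbourhood characterisation of Theorems~\ref{Thm-2.6}--\ref{Thm-2.7} and invoke Lemma~\ref{Lem-2.8}---is exactly what the paper does, only the paper is terser still: it simply asserts that $\chi$-rainbow connectivity of $G_i$ ``implies that every vertex $v\in V(G_i)$ has a closed neighbourhood which contains at least one coloured vertex of each colour'', and, in the other direction, that minimising a $J$-colouring via Lemma~\ref{Lem-2.8}(i) directly ``results in a $\chi_i$-rainbow connected colouring''. Your path-splicing and your neighbour-of-$v$ argument are precisely the bridging steps the paper leaves implicit.

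The gap you flag in the converse direction is genuine, and your proposed repair does not close it. Choosing $u$ adjacent to $v$ does not force the guaranteed rainbow $(u,v)$-path to lie in $N[v]$: the path may go the long way round and collect the missing colour far from $v$. Concretely, take $C_5$ with the $\chi^-$-colouring $1,2,1,2,3$ on $v_1,\dots,v_5$. For every pair of vertices one of the two arcs contains all three colours, so $C_5$ is $\chi$-rainbow connected in the sense of Definition~\ref{Defn-3.10}; yet $N[v_2]=\{v_1,v_2,v_3\}$ carries only colours $\{1,2\}$, so $v_2$ does not yield a rainbow neighbourhood---and indeed $C_5$ admits no $J$-colouring. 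Hence the implication ``$\chi$-rainbow connected $\Rightarrow$ every vertex yields a rainbow neighbourhood'' is false in general, and neither your induction modelled on Theorem~\ref{Thm-2.7} nor the paper's bare assertion can go through as stated. (Separately, in the forward direction your splicing as written may produce a walk rather than a path: a detour $x\to y\to x$ repeats $x$, so you would still owe an argument that the detour vertex can be inserted while keeping the result a genuine path.)
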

\begin{proof}
If $G$ is $\chi$-rainbow connected, select any $G_i$. Obviously, $G_i$ is $\chi_i$-rainbow connected. It implies that every vertex $v\in V(G_i)$ has a closed neighbourhood which contains at least one coloured vertex of each colour in the $\chi_i$-colouring set. If the set of colours is not maximal, then Lemma \ref{Lem-2.8}(ii) provides for the existence of such maximal proper colouring. Hence, the existence of a $J_i$-colouring follows and by immediate induction, a $J^c$-colouring is obtained for $G$. 

Conversely, if $G_i,\ \forall i$ admits a $J^c$-colouring, then by Lemma \ref{Lem-2.8}(i) allows for the minimisation of the colour set which results in a $\chi_i$-rainbow connected colouring of $G_i$. Then, by induction, a $\chi$-rainbow connected colouring is obtained for $G$.  This completes the proof.
\end{proof}

\ni Hence, the following result is immediate.

\begin{corollary}\label{Cor-3.12}
A graph $G=\bigcup\limits_{i=1}^{k}G_i$ is $J^c$-rainbow connected if and only if $G$ is $\chi$-rainbow connected.
\end{corollary}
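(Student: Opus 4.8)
The plan is to obtain Corollary~\ref{Cor-3.12} as a direct composition of the two Fundamental Theorems just proved, rather than re-entering the combinatorial analysis of cycles and pendant vertices. First I would observe that $J^c$-rainbow connectedness is, by its very definition (Definition~\ref{Def-3.1}), only attributed to graphs that already admit a $J^c$-colouring; so the hypothesis ``$G$ is $J^c$-rainbow connected'' silently carries with it ``$G$ admits a $J^c$-colouring.'' This lets me invoke Theorem~\ref{Thm-3.11}, which gives the biconditional ``$G$ admits a $J^c$-colouring $\iff$ $G$ is $\chi$-rainbow connected.'' Thus the forward direction is immediate: if $G$ is $J^c$-rainbow connected then $G$ admits a $J^c$-colouring, hence by Theorem~\ref{Thm-3.11} $G$ is $\chi$-rainbow connected.

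For the reverse direction, suppose $G$ is $\chi$-rainbow connected. By Theorem~\ref{Thm-3.11} again, $G$ admits a $J^c$-colouring, i.e.\ each $G_i$ admits a $J$-colouring. Now I would split according to the dichotomy supplied by the Fundamental Theorem of $J^c$-Rainbow Connectivity (Theorem~\ref{Thm-3.9}): either some component has $\J(G_i)\le 2$, or every component containing a cycle $C_n$ with $n\equiv 0\pmod 3$ has no pendant vertex. The key point is that the second alternative is \emph{automatically} satisfied once we know each $G_i$ is $J$-colourable: by the characterisation recalled in the text (a cycle $C_n$ is $J$-colourable only when $n\equiv 0\pmod 2$ or $n\equiv 0\pmod 3$, and the pendant-vertex obstruction used in the proof of Theorem~\ref{Thm-3.9}(ii)), a component that both carries a $C_n$ with $n\equiv 0\pmod 3$ and has a pendant vertex cannot admit a $J$-colouring at all. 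Hence the hypothesis of Theorem~\ref{Thm-3.9} is met, and $G$ is $J^c$-rainbow connected.

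The main obstacle, such as it is, is purely bookkeeping: making explicit that Definition~\ref{Def-3.1} ties $J^c$-rainbow connectedness to the prior existence of a $J^c$-colouring, so that Theorem~\ref{Thm-3.11} can be applied without an extra hypothesis, and confirming that the ``no pendant vertices on $3$-divisible cycle components'' condition in Theorem~\ref{Thm-3.9} is not an additional restriction but a consequence of $J$-colourability of the components. Once those two observations are in place, Corollary~\ref{Cor-3.12} follows by chaining Theorem~\ref{Thm-3.11} and Theorem~\ref{Thm-3.9} in each direction, with no further computation. I would write the proof in three or four sentences, citing Theorem~\ref{Thm-3.11} for the equivalence with $\chi$-rainbow connectivity and Theorem~\ref{Thm-3.9} to pass between $J^c$-colourability and $J^c$-rainbow connectedness.
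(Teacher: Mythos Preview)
Your approach is correct and matches the paper's intent: the paper gives no proof at all beyond the line ``Hence, the following result is immediate'' placed right after Theorem~\ref{Thm-3.11}, so your chaining of Theorem~\ref{Thm-3.11} with Theorem~\ref{Thm-3.9} is precisely the elaboration the paper leaves to the reader. In particular, your observation that the pendant-vertex condition in Theorem~\ref{Thm-3.9} is automatically met once each $G_i$ is $J$-colourable is exactly what is needed to make the reverse implication go through, and the paper's own proof of Theorem~\ref{Thm-3.9}(ii) supplies that fact.
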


\section{Conclusion}

Clearly, the notion of comp\^{o}nent\u{a} analysis opens a wide scope for further essays on generalising existing results in respect of the numerous variants and invariants of graphs. The authors view comp\^{o}nent\u{a} analysis an easy, but important way to escape from the conventional restrictions placed on graphs in many studies in graph theory. Since, proper colouring of a graph $G$ or derivative colourings thereof are defined in terms of pairs of distinct vertices, a graph need not be simple. Because a vertex is inherently adjacent to itself, a loop expresses explicit adjacency to itself therefore, a graph may have loops. Trivially it may have multiple edges. Therefore, elegant generalisations to include pseudo graphs are open. Many real world problems related to magnetic force fields, gravity force fields and alike can best be modelled as pseudo graphs. The role which colouring of graphs play in these areas remains silent thus far.

\end{document}